\documentclass{article}
\usepackage[letterpaper, left=1in, right=1.5in, vmargin=1in]{geometry}\usepackage[utf8]{inputenc}
\usepackage{amsmath,amssymb,amsthm}
\usepackage{enumitem}
\usepackage[noabbrev]{cleveref}

\usepackage{titlesec}
\titlelabel{\thetitle.\quad}

\usepackage[textsize=scriptsize]{todonotes}
\newtheorem{theorem}{Theorem}
\newtheorem{corollary}[theorem]{Corollary}

\newtheorem{proposition}[theorem]{Proposition}

\title{\textbf{Polynomial Identities for Generalized Fibonacci Sequences}}
\author{Joey Glampe, Aryaman Joshi, Swagat Malla, Matthew Richey, Matthew Wright
\\[1em] St.\ Olaf College (Northfield, MN)}

\begin{document}

\maketitle

\begin{abstract}
    The Fibonacci, Lucas, and Pell sequences satisfy certain families of ``polynomial'' identities. 
    While a few members of these families are well-known, we are not able to find a general statement of these identities in the mathematical literature.
    We show that such polynomial identities exist for many generalized Fibonacci sequences, and we classify sequences that have these identities obtained by two distinct approaches.
    One approach uses the Girard-Waring formula, while the other relies on Binet's formula and linear algebra.
\end{abstract}

\section{Background}
The Fibonacci sequence is defined by $F_0 = 0$, $F_1 = 1$, and the recurrence $F_{n} = F_{n-1} + F_{n-2}$ for integers $n \ge 2$.
The Fibonacci numbers satisfy a family of ``polynomial'' identities, which allow $F_{kn}$ to be expressed as a polynomial in $F_n$ for any positive odd integer $k$.
For example, when $k=3$ we have the well-known identity
\begin{equation}\label{fib3n}
  F_{3n} = 5F_n^3 + (-1)^n 3 F_n.
\end{equation}
When $k=5$ we obtain
\begin{equation}\label{fib5n}
    F_{5n} = 25F_n^5 + (-1)^n 25 F_n^3 + 5F_n.
\end{equation}
Observe that in each of \cref{fib3n,fib5n}, the right-hand side is a polynomial in $F_n$, though there is one coefficient that changes sign according to the parity of $n$ due to the factor of $(-1)^n$.
\Cref{fib3n} appears in Koshy's text \cite[page 110]{koshy2017} with reference to Halton \cite{halton1965}.
Jennings states \cref{fib3n,fib5n}, along with a similar identity for $F_{7n}$, and then shows that for every odd $k$, $F_{kn}$ can be expressed as a polynomial in $F_n$ \cite{jennings1993}. His proof relies on the Binet formula for the Fibonacci numbers and does not hold when $k$ is even.

Similar identities exist for the Lucas numbers, but for these $k$ may be any positive integer.
The Lucas sequence is defined $L_0 = 2$, $L_1 = 1$, and $L_{n} = L_{n-1} + L_{n-2}$ for $n \ge 2$.
Squaring the Lucas numbers leads to the simple identity
\begin{equation}
    L_{2n} = L_n^2 - 2(-1)^n.
\end{equation}
Higher powers of Lucas numbers satisfy identities such as
\begin{align}
    L_{3n} &= L_n^3 - 3(-1)^n L_n, \\
    L_{4n} &= L_n^4 - 4(-1)^n L_n^2 + 2,
\end{align}
which can be obtained from the identities in exercises 61 and 80 on page 118 of Koshy's text \cite{koshy2017}.
We call these \emph{polynomial identities} since the right-hand side is a polynomial in $L_n$, though again the sign of certain coefficients depends on the parity of $n$.
Similar identities with the left side $L_{kn}$ for all positive integers $k$ are possible.

This immediately leads to the question: Why do the Lucas numbers admit polynomial identities for all positive integers $k$, while the Fibonacci numbers only admit such identities for odd $k$?
We will return to this question after further generalization.

The Pell numbers \emph{also} satisfy polynomial identities.
The Pell sequence is defined $P_0 = 0$, $P_1 = 1$, and $P_{n} = 2P_{n-1} + P_{n-2}$ for $n \ge 2$.
Martinjak and Vrsaljko state the identities
\begin{align}
    P_{3n} &= 8P_n^3 + (-1)^n 3 P_n, \\
    P_{5n} &= 64P_n^5 + (-1)^n 40 P_n^3 + 5P_n,
\end{align}
which they generalize to a closed-form expression for $P_{kn}$ in terms of $P_n$ when $k$ is odd \cite{martinjak2017}.
Furthermore, they consider recursive sequences $(A_n)$ defined by $A_0=0$, $A_1=1$, and $A_{n} = sA_{n-1} + A_{n-2}$, generalizing both the Fibonacci and Pell sequences. 
They show that any such sequence satisfies the identity
\begin{equation}\label{mvPolyId}
    A_{(2m+1)n} = \sum_{i=0}^m (-1)^{n(m+i)} (s^2 + 4)^i \frac{2m + 1}{2i + 1} \binom{m+i}{2i} A_n^{2i+1}
\end{equation}
for positive integers $m$.
\Cref{mvPolyId} expresses $A_{(2m+1)n}$ as a polynomial in $A_n$, where the coefficients of the polynomial are allowed to include a factor of $(-1)^n$.

Having encountered identities of a particular polynomial form for Fibonacci, Lucas, and Pell sequences, as well as the more general $(A_n)$ sequence above, we are led to ask whether such identities exist for other generalized Fibonacci sequences.
Let $a$, $b$, $r$, and $s$ be integers with $a$ and $b$ not both zero, and $r$ and $s$ both nonzero.
Define a generalized Fibonacci sequence, or \emph{gibonacci sequence}, $(G_n)$ by
\begin{equation}\label{gibSeqDef}
    G_0 = a,\quad G_1 = b,\quad \text{and}\quad G_n = rG_{n-1} + sG_{n-2} \text{ for } n \ge 2.
\end{equation} 
Our questions are: Do all such gibonacci sequences have identities of the form
\begin{equation}\label{gibPoly}
    G_{kn} = a_k G_n^k + a_{k-1}G_n^{k-1} + \cdots + a_2 G_n^2 + a_1 G_n + a_0,
\end{equation}
where the coefficients $a_i$ may include a $(-1)^n$ factor?
If this is not possible for all gibonacci sequences, for which sequences \emph{is} it possible?
Given a gibonacci sequence, for which values of $k$ do such identities exist?
How are the coefficients obtained? 
Is every other coefficient always zero, as is the case in the identities presented above?

We find that polynomial identities exist for some, but not all, gibonacci sequences of the form defined above.
The Fibonacci, Lucas, and Pell polynomial identities stated above can be derived from the Girard-Waring formula, and these identities generalize to certain other gibonacci sequences.
Another set of polynomial identities for other gibonacci sequences can be obtained by a careful application of Binet's formula and linear algebra.
Further polynomial identities exist for the special cases of geometric and arithmetic sequences.

\section{The Girard-Waring formula}

Our first derivation of gibonacci polynomial identities begins with the Girard-Waring formula.
For any real numbers $x$ and $y$, and positive integer $k$, 
\begin{equation}\label{waring}
    x^k + y^k = \sum_{i=0}^{\left\lfloor k/2 \right\rfloor} (-1)^i \frac{k}{k-i} \binom{k-i}{i} (x+y)^{k-2i} (xy)^i.
\end{equation}
This identity was found by Girard in 1629 and often attributed to Waring, who stated it in 1762 \cite{gould1997}.

The first use of the Girard-Waring formula to derive polynomial identities involving the Fibonacci numbers may have been by Filipponi, who also used this formula to obtain polynomial relationships between Lucas and Fibonacci numbers \cite{filipponi1995}.
Swamy extended Filipponi's work, using another combinatorial identity to establish new relations between Lucas and Fibonacci numbers \cite{swamy1997}. 
However, the Girard-Waring formula is more broadly applicable, resulting in polynomial identities for many other (but not all) gibonacci sequences.

Consideration of exponents provides some intuition for why the Girard-Waring formula may be promising for our purposes. 
Recall that each of the polynomial identities stated above for the Fibonacci, Lucas, and Pell sequences involves only exponents of the same parity. On the right side of \cref{waring}, the factor $(x+y)^{k-2i}$ will also have exponents of the same parity since $i$ is an integer. Thus, if $x+y = G_n$, \cref{waring} might yield a polynomial identity---an idea which is key to the proof of the following theorem.

\begin{theorem}\label{gibPolyThm}
    Define sequence $(G_n)$ by $G_0 = a$, $G_1 = b$, and $G_n = rG_{n-1} + sG_{n-2}$ for $n \ge 2$, where $a$ and $b$  are not both zero, $r$ and $s$ are both nonzero, and $s \ne \frac{-r^2}{4}$.
    Then $(G_n)$ satisfies an identity of the form
    \begin{equation}\label{polyIdForm}
        G_{kn} = \sum_{i=0}^{\left\lfloor k/2 \right\rfloor}  (-s)^{ni} a_i G_n^{k-2i}
    \end{equation}
    for some coefficients $a_i$ (that depend only on the sequence $(G_n)$ and $k$) if one of the following cases holds:
    \begin{enumerate}[label=\bf{Case \arabic*.},leftmargin=46pt]
        \item $(G_n)$ is a geometric sequence and $k$ is a positive integer;
        \item the initial term $a=0$ and $k$ is a positive odd integer;
        \item $a$ divides $2b$, $r = \frac{2b}{a}$, and $k$ is a positive integer.
    \end{enumerate}
\end{theorem}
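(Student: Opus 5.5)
The plan is to use Binet's formula. Since $s \ne -r^2/4$, the characteristic polynomial $t^2 - rt - s$ has distinct roots $\alpha \ne \beta$, with $\alpha+\beta = r$ and $\alpha\beta = -s$. Hence $G_n = A\alpha^n + B\beta^n$ for constants $A,B$ determined by $a,b$; these may be complex, which does not affect the algebra. In each case we aim to write $G_n = x + y$ with $xy = (-s)^n c$ for a constant $c$, and with $G_{kn}$ a fixed constant multiple of $x^k + y^k$. The Girard-Waring formula \cref{waring} holds as a polynomial identity, so it applies for complex $x,y$. It then gives
\[
G_{kn} = \lambda\sum_{i=0}^{\lfloor k/2\rfloor} (-1)^i \frac{k}{k-i}\binom{k-i}{i} G_n^{k-2i} c^i (-s)^{ni},
\]
which is exactly the form \eqref{polyIdForm} with $a_i = \lambda(-1)^i\frac{k}{k-i}\binom{k-i}{i}c^i$. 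These coefficients depend only on the sequence and on $k$.

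\textbf{Case 2.} If $a=0$ then $A=-B$, so $G_n = A(\alpha^n - \beta^n)$. Set $x = A\alpha^n$ and $y = -A\beta^n$. Then $xy = -A^2(\alpha\beta)^n = -A^2(-s)^n$, so $c=-A^2$. For odd $k$ we have $x^k + y^k = A^k(\alpha^{kn} - \beta^{kn}) = A^{k-1}G_{kn}$, so $\lambda = A^{1-k}$. Here $A \ne 0$ because $b \ne 0$. The identity fails for even $k$ because $y^k$ would lose its sign, which explains why oddness is needed.

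\textbf{Case 3.} If $r = 2b/a$ then $b = ar/2 = a(\alpha+\beta)/2$, and from $A+B=a$ and $A\alpha+B\beta=b$ we get $A = B = a/2$. So $G_n = \frac a2(\alpha^n+\beta^n)$. Take $x = \frac a2\alpha^n$ and $y=\frac a2\beta^n$. Then $xy = \frac{a^2}{4}(-s)^n$, and for every $k$ we get $x^k+y^k = (a/2)^{k-1}G_{kn}$. Here $a\ne 0$: otherwise $r=2b/a$ is meaningless, and both $a$ and $b$ would be zero. The hypothesis $a \mid 2b$ only guarantees that $r$ is an integer.

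\textbf{Case 1.} If $(G_n)$ is geometric, then $G_n = a q^n$ with $q$ a root. (If $a = 0$ the sequence is identically zero, which is excluded.) So $G_{kn} = a^{1-k}G_n^k$. This is the form \eqref{polyIdForm} with $a_0 = a^{1-k}$ and all other $a_i = 0$. No Girard-Waring argument is needed here.

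\textbf{Main obstacle.} The step needing care is checking that the constants $\lambda$ and $c$ are well defined and that the coefficients $a_i$ are independent of $n$. This comes down to the product $xy$ being exactly $(-s)^n$ times a constant, which uses $\alpha\beta=-s$. One must also verify $A\ne0$ in Case 2 and $a\ne0$ in Case 3, and note that complex $A,B,\alpha,\beta$ are harmless. In Cases 2 and 3 the powers $c^i$ combine with $\lambda$ into real (indeed rational) numbers: $c = -A^2 = b^2/(r^2+4s)$ in Case 2, while in Case 3 the constants are powers of $a/2$.
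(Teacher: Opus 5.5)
Your proposal is correct and follows essentially the paper's argument: Binet's formula with $x=c_1t_1^n$, $y=c_2t_2^n$ substituted into Girard--Waring, split into the cases $c_2=0$, $c_2=-c_1$, $c_2=c_1$. You treat each case directly rather than first isolating the term $c_2(c_2^{k-1}-c_1^{k-1})t_2^{kn}$, and you are more careful than the paper about $A\neq 0$, $a\neq 0$, and complex roots. One small slip in your closing remark: $c=-A^2=-b^2/(r^2+4s)$, not $+b^2/(r^2+4s)$; this does not affect the proof or the rationality of the coefficients.
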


We emphasize that the coefficients $a_i$ in \Cref{gibPolyThm} do not depend on $n$.

\begin{proof}[Proof of \Cref{gibPolyThm}]
    The recurrence relation that defines $(G_n)$ has characteristic equation $t^2 - rt - s = 0$.
    The roots of the characteristic polynomial are 
    \[ t_{1,2} = \frac{r \pm \sqrt{r^2 + 4s}}{2} .\]
    Since $s \ne \frac{-r^2}{4}$, these roots are distinct.
    Thus, the terms of the sequence can be written using the Binet formula
    \[ G_n = c_1 t_1^n + c_2 t_2^n \] 
    for some coefficients $c_1$ and $c_2$.
    Note that since $s$ is nonzero, the roots $t_1$ and $t_2$ are also both nonzero.
    Using the initial values $a = c_1 + c_2$ and $b = c_1t_1 + c_2 t_2$, we find
    \[ c_1 = \frac{2b -ar + a\sqrt{r^2 + 4s}}{2\sqrt{r^2 + 4s}}  = \frac{b-at_2}{\sqrt{r^2 + 4s}}\]
    and
    \[ c_{2} = \frac{ar+ a\sqrt{r^2 + 4s} -2b}{2\sqrt{r^{2} + 4s}} = \frac{at_{1} -b}{\sqrt{r^2 + 4s}}.\]
    Assume without loss of generality that $c_1 \ne 0$ (otherwise swap indexes).

    Let $x = c_1t_1^n$ and $y = c_2t_2^n$.
    Substituting these values of $x$ and $y$ into the Girard-Waring formula (\cref{waring}), we obtain
    \begin{equation}\label{waring2}
        (c_1t_1^n)^k + (c_2t_2^n)^k = \sum_{i=0}^{\left\lfloor k/2 \right\rfloor} (-1)^i \frac{k}{k-i} \binom{k-i}{i} (c_1t_1^n + c_2t_2^n)^{k-2i} (c_1t_1^n c_2t_2^n)^i.
    \end{equation}
    On the left side, we can isolate the quantity $c_1 t_1^{kn} + c_2 t_2^{kn}$, which equals $G_{kn}$.
    To simplify the right side, note that $c_1 t_1^n + c_2 t_2^n = G_n$ and $t_1t_2 = -s$.
    The identity then becomes
    \begin{equation}\label{gibWaring}
        c_1^{k-1} G_{kn} + c_2\left( c_2^{k-1} - c_1^{k-1}\right) t_2^{kn} = \sum_{i=0}^{\left\lfloor k/2 \right\rfloor} (-1)^i \frac{k}{k-i} \binom{k-i}{i} (c_1c_2)^i (-s)^{ni} G_n^{k-2i}.
    \end{equation}

    Observe that \cref{gibWaring} is nearly in the desired form, but the left side contains the term $c_2\left( c_2^{k-1} - c_1^{k-1}\right) t_2^{kn}$, which we would like to eliminate.
    This term vanishes in three cases.
    \begin{enumerate}[label=\textbf{Case \arabic*:},leftmargin=55pt]
        \item If $c_2=0$, then the sequence $(G_n)$ is in fact a geometric sequence given by $G_n = c_1t_1^n$.
            In this case, \cref{gibWaring} reduces to the identity
            \begin{equation}\label{idCase1}
                G_{kn} = \frac{1}{c_1^{k-1}} G_n^k
            \end{equation}
            for all positive integers $k$.
            This is an identity in the form of \cref{polyIdForm} with $a_0 = \frac{1}{c_1^{k-1}}$ and all other $a_i = 0$.

    \item If $c_2 = -c_1$, then $\left( c_2^{k-1} - c_1^{k-1}\right) = 0$ when $k$ is odd.
        In this case, let $c = c_1 = -c_2$, and note that this implies the initial term $G_0 = a = 0$.
        Then \cref{gibWaring} reduces to the identity
        \begin{equation}\label{idCase2}
            G_{kn} = \sum_{i=0}^{\left\lfloor k/2 \right\rfloor} \frac{k}{k-i} \binom{k-i}{i} c^{2i-k+1} (-s)^{ni} G_n^{k-2i}
        \end{equation}
        for all positive odd integers $k$.
        This is an identity in the form of \cref{polyIdForm} with $a_i = \frac{k}{k-i} \binom{k-i}{i} c^{2i-k+1}$, for any sequence $(G_n)$ as defined above whose initial term is zero.
        
    \item If $c_2 = c_1$, then $\left( c_2^{k-1} - c_1^{k-1}\right) = 0$ and for all $k$.
        In this case, let $c = c_1 = c_2$. 
        Then \cref{gibWaring} reduces to the identity
        \begin{equation}\label{idCase3}
            G_{kn} = \sum_{i=0}^{\left\lfloor k/2 \right\rfloor} (-1)^i \frac{k}{k-i} \binom{k-i}{i} c^{2i-k+1} (-s)^{ni} G_n^{k-2i}
        \end{equation}
        for all positive integers $k$.
        This is an identity in the form of \cref{polyIdForm} with $a_i = (-1)^i \frac{k}{k-i} \binom{k-i}{i} c^{2i-k+1}$.

        Now $c_1 = c_2 = c$ exactly when the initial values are $a = 2c$ and $b = c(t_1 + t_2) = cr$, which implies that $r = \frac{2b}{a}$.
        Since we require $r$ to be an integer, we must have that $a$ divides $2b$.
        Thus, whenever $a$ divides $2b$ and $r = \frac{2b}{a}$, \cref{idCase3} holds for all positive integers $k$. 
        \qedhere
    \end{enumerate}
\end{proof}

Note that the identity in \Cref{gibPolyThm} is more general than what we set out to find, due to the factor $(-s)^{ni}$.
Thus, we may regard the identities in \cref{idCase2,idCase3} as \emph{generalized} polynomial identities, since their coefficients depend on $s$. 
However, if $s = \pm 1$, we do in fact obtain polynomial identities in the form of \cref{{gibPoly}}, where the coefficients $a_i$ may include a $(-1)^n$ factor.

In particular, if $s=1$, then $G_{kn}$ is not equal to the same polynomial in $G_n$ for all $n$, but two related polynomials, whose only difference is the signs of every other coefficient.
The Fibonacci, Lucas, and Pell sequences all fall into this pattern.
Moreover, the Fibonacci and Pell sequences exhibit polynomial identities for only odd $k$ because these sequences start with their zero-indexed term equal to zero (Case 2 above).
The Lucas sequence has such identities for all $k$ because it falls into Case 3.

Furthermore, if $s=-1$, the identities in \cref{idCase2,idCase3} are especially nice, since the $(-s)^{nk}$ factor always equals $1$.
In this case, $G_{kn}$ is given by the \emph{same} polynomial in $G_n$ for all $n$.
As an example, let $S_0 = 0$, $S_1 = 1$, and $S_n = 4S_{n-1} - S_{n-2}$. 
This sequence falls into Case 2 of \Cref{gibPolyThm}, so it satisfies the family of identities
\begin{align*}
   S_{3n} &= 12 S_n^3 + 3 S_n \\
   S_{5n} &= 144 S_n^5 + 60 S_n^3 + 5 S_n \\
   S_{7n} &= 1728 S_n^7 + 1008 S_n^5 + 168 S_n^3 + 7 S_n \\
   &\ \,\vdots
\end{align*}
for all nonnegative integers $n$.

\section{A general approach based on Binet's formula}

While the cases identified above generalize our original identities involving the Fibonacci, Lucas, and Pell sequences, we find that other polynomial identities exist for other gibonacci sequences.
For example, consider the sequence defined $S_0=1$, $S_1=1$, and $S_n = 5S_{n-1} + 6S_{n-2}$ for $n \ge 2$.
(This sequence is $1, 1, 11, 61, 371, 2221, 13331, 79981, \ldots$.)
This sequence satisfies the polynomial identity
\begin{equation}\label{gen3n}
  S_{3n} = \frac{49}{4}S_n^3 + (-1)^{n+1}\frac{105}{4}S_n^2 + \frac{75}{4}S_n + (-1)^{n+1}\frac{15}{4}.
\end{equation}
Intriguingly, the coefficients of \cref{gen3n} are not integers, in contrast with the identities examined earlier.
Furthermore, the exponents of $S^n$ in \cref{gen3n} are consecutive integers, rather than integers with the same parity as we have seen previously. 
Evidently this new identity does not arise from the argument involving Waring's formula presented in \Cref{gibPolyThm}; rather, it is a consequence of the next theorem.

\begin{theorem}\label{genpoly}
    Let sequence $(G_n)$ be defined $G_0 = a$, $G_1 = b$, and $G_n = rG_{n-1} + sG_{n-2}$ for $n \ge 2$, where $a$ and $b$  are not both zero, $r$ and $s$ are both nonzero, and $s \ne \frac{-r^2}{4}$.
    Then 
    \begin{equation}\label{polyId1}
        G_{kn} = \sum_{i=0}^{k}  a_i(n) G_n^{i}
    \end{equation}
    where $a_0(n) = c_2 t_2^{kn} + \frac{(-c_2 t_2^{n})^k }{c_1^{k-1}}$ and $a_i(n) = \binom{k}{i} \frac{(-c_2 t_2^{n})^{k-i} }{c_1^{k-1}}$ for $i \in \{1, 2, \ldots, k\}$.
\end{theorem}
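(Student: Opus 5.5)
The plan is to use the Binet representation from the proof of \Cref{gibPolyThm}, under the same setup. The roots $t_1,t_2$ of $t^2-rt-s$ are distinct and nonzero, and $G_n = c_1t_1^n + c_2t_2^n$ with $c_1 \ne 0$; as before we may swap indices if necessary. The key observation is that $c_1 t_1^n$ can be written in terms of $G_n$: namely $c_1t_1^n = G_n - c_2t_2^n$. Hence
\[ G_{kn} = c_1 t_1^{kn} + c_2 t_2^{kn} = \frac{(c_1t_1^n)^k}{c_1^{k-1}} + c_2t_2^{kn} = \frac{(G_n - c_2t_2^n)^k}{c_1^{k-1}} + c_2 t_2^{kn}. \]
Dividing by $c_1^{k-1}$ is legitimate because $c_1 \ne 0$.

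Next I would expand $(G_n - c_2t_2^n)^k$ with the binomial theorem:
\[ (G_n - c_2t_2^n)^k = \sum_{i=0}^{k} \binom{k}{i} G_n^{i} (-c_2t_2^n)^{k-i}. \]
Dividing by $c_1^{k-1}$ gives, for each $i\ge 1$, the coefficient $a_i(n)=\binom{k}{i}\frac{(-c_2t_2^n)^{k-i}}{c_1^{k-1}}$. The $i=0$ term is $\frac{(-c_2t_2^n)^k}{c_1^{k-1}}$. Adding the leftover $c_2t_2^{kn}$ to it gives exactly the stated $a_0(n)$, and this is just a matter of collecting terms.

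I do not expect a serious obstacle, since the identity is purely algebraic and holds for every integer $n\ge 0$ and every positive integer $k$. The only points needing care are the following.
\begin{enumerate}
\item The hypothesis $s\ne -r^2/4$ ensures distinct roots, so the two-term Binet formula is valid.
\item Nonzero $c_1$ must be justified. Since $a,b$ are not both zero, $c_1$ and $c_2$ cannot both vanish (otherwise $a=c_1+c_2=0$ and $b=0$), so relabeling makes $c_1\neq0$.
\item The statement's coefficients are written with $c_1$ in the denominator and $c_2,t_2$ elsewhere, so the relabeling convention should be stated explicitly.
\end{enumerate}
Finally, I would remark that the $a_i(n)$ genuinely depend on $n$ through $t_2^n$, which is what the later analysis must control.
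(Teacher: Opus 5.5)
Your proof is correct, but it takes a different and shorter route than the paper.

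The paper writes $G_{kn}=\sum_i a_i(n)\,(c_1t_1^n+c_2t_2^n)^i$ with unknown $a_i(n)$ and expands the right side. It then matches powers of $c_1t_1^n$, which gives an upper-triangular $(k+1)\times(k+1)$ linear system whose diagonal entries are $c_1^j$. From $c_1\neq 0$ it concludes the system has a solution. Finally it checks that the stated $a_i(n)$ solve the system, using the binomial orthogonality relation $\sum_{i=j}^{k}(-1)^{k-i}\binom{i}{j}\binom{k}{i}=0$ for $j<k$.

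Your substitution $c_1t_1^n=G_n-c_2t_2^n$, followed by the binomial theorem, amounts to writing down the inverse of that Pascal-type triangular matrix in one step. So you derive the coefficients rather than verifying a guessed solution, and you need neither the determinant argument nor the orthogonality identity. It also makes clear where the $a_i(n)$ come from: they are the coefficients in the expansion of $(G_n-c_2t_2^n)^k/c_1^{k-1}$.

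What the paper's linear-algebra framing adds is that the nonzero determinant shows the $a_i(n)$ are the unique coefficients solving the matched system. This underlies the later discussion of when they are free of $n$ (namely $t_2=\pm1$). You get the same uniqueness implicitly, since $1,G_n,\dots,G_n^k$ arise from $1,X,\dots,X^k$ with $X=c_1t_1^n$ by an invertible triangular change of basis. The theorem as stated does not require uniqueness, though.

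Your side remarks on distinct roots, on $c_1,c_2$ not both vanishing, and on the relabeling convention are all sound.
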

\begin{proof}
    The Binet formula for this sequence is $G_n = c_1 t_1^n + c_2 t_2^n$.
    Assume without loss of generality that $c_1 \ne 0$ (otherwise swap indexes).
    Since $s \ne \frac{-r^2}{4}$, roots $t_1$ and $t_2$ are distinct, so Binet's formula allows us to rewrite \cref{polyId1} as
    \begin{equation}\label{proofEq1}
        c_1 t_1^{kn} + c_2 t_2^{kn} = \sum_{i=0}^k a_i(n) \left( c_1 t_1^n + c_2 t_2^n \right)^i.
    \end{equation}
    Expanding the right-hand side, we can rewrite \cref{proofEq1} as
    \begin{equation}\label{proofEq2}
        c_1 t_1^{kn} + c_2 t_2^{kn} = \sum_{i=0}^k \sum_{j=0}^k a_i(n) \binom{i}{j} (c_1 t_1^n)^j (c_2 t_2^{n})^{i-j}.
    \end{equation}
    
    By separating out distinct powers of $t_1$, we can rewrite \cref{proofEq2} as $k+1$ distinct equations, each of which is linear in the coefficients $a_i(n)$.
    Equation \cref{proofEq2} can be then written in matrix form as follows:    
    \begin{equation}\label{proofMxEq}
        \begin{bmatrix}
            c_2 t_2^{kn} \\ 0 \\ 0 \\ \vdots \\ 0 \\ c_1
        \end{bmatrix}
        = 
        \begin{bmatrix}
            1 & c_2 t_2^{n} & c_2^2 t_2^{2n} & \cdots & c_2^{k-1} t_2^{(k-1)n} & c_2^k t_2^{kn} \\ 
            0 & c_1 & 2c_1c_2 t_2^{n} & \cdots & (k-1) c_1 c_2^{k-2} t_2^{(k-2)n} & k c_1 c_2^{k-1} t_2^{(k-1)n} \\
            0 & 0 & c_1^2 &  \cdots & \binom{k-1}{2} c_1^2 c_2^{k-3} t_2^{(k-3)n} & \binom{k}{2} c_1^2 c_2^{k-2} t_2^{(k-2)n} \\
            \vdots & \vdots & \vdots & \ddots & \vdots & \vdots \\
            0 & 0 & 0 & \cdots & c_1^{k-1} & k c_1^{(k-1)} c_2 t_2^{n} \\
            0 & 0 & 0 & \cdots & 0 & c_1^{k} \\
        \end{bmatrix}
        \begin{bmatrix}
            a_0(n) \\ a_1(n) \\ a_2(n) \\ \vdots \\ a_{k-1}(n) \\ a_{k}(n)
        \end{bmatrix}.
    \end{equation}
    In the square matrix above, the entry in row $j$ column $i$ (indexing rows and columns from $0$ to $k$) is 
    \[ \binom{i}{j} c_1^j (c_2 t_2^{n})^{i-j}. \]
    Importantly, since since $c_1 \ne 0$, this matrix has nonzero determinant.
    Therefore, there exist coefficients $a_i(n)$ that satisfy \cref{polyId1}.
    These coefficients are:
    \begin{align*}
       a_0(n) &= c_2 t_2^{kn} + \frac{(-c_2 t_2^{n})^k }{c_1^{k-1}},   \\
       a_i(n) &= \binom{k}{i} \frac{(-c_2 t_2^{n})^{k-i} }{c_1^{k-1}} \quad \text{for} \quad i \in \{1, 2, \ldots, k\}.
    \end{align*}

    To verify this solution, consider row $j \in \{1, 2, \ldots, k-1\}$ of the matrix product in \cref{proofMxEq}. This row of the product is:
    \begin{align*}
        \sum_{i=j}^k \binom{i}{j} c_1^j (c_2 t_2^{n})^{i-j} \cdot a_i(n) 
        &= \sum_{i=j}^k \binom{i}{j} c_1^j (c_2 t_2^{n})^{i-j} \cdot \binom{k}{i} \frac{(-c_2 t_2^{n})^{k-i} }{c_1^{k-1}} \\
        &= \frac{(c_2t_2^n)^{k-j}}{c_1^{k-j-1}} \sum_{i=j}^k (-1)^{k-i} \binom{i}{j}\binom{k}{i} = 0,
    \end{align*}
    where the last equality is due to the binomial coefficient orthogonality identity.
    The justification for row $0$ of the product is essentially the same, except that $a_0(n)$ adds the $c_2t_2^{kn}$ term that is found on the left side of \cref{proofMxEq}.
    For row $k$ of \cref{proofMxEq}, it is straightforward to verify that $c_1^k \cdot a_k(n) = c_1$.
\end{proof}

Again, the identities we obtain from \Cref{genpoly} are more general than what we sought, since the coefficients of the sum on the right side of \cref{polyId1} depend on $n$, in the form of $t_2^n$.
However, if $t_2 = \pm 1$, we obtain identities in the form of \cref{gibPoly}, in which the coefficients may include a $(-1)^n$ factor but no other dependence on $n$.
We state these cases in the following corollaries.

\begin{corollary}
    Let $s = 1-r$, so that the gibonacci sequence $(G_n)$ is defined $G_0 = a$, $G_1 = b$, and $G_n = rG_{n-1} + (1-r)G_{n-2}$ for $n \ge 2$.
    This sequence satisfies the polynomial identity
    \begin{equation}\label{polyId2}
        G_{kn} = c_2 + \sum_{i=0}^{k} \binom{k}{i} \frac{(-c_2)^{k-i} }{c_1^{k-1}} G_n^{i}.
    \end{equation}
\end{corollary}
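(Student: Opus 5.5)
We plan to derive the corollary as the special case $t_2 = 1$ of \Cref{genpoly}. With $s = 1-r$, the characteristic polynomial is $t^2 - rt - (1-r)$. It factors as $(t-1)(t-(r-1))$, so its roots are $1$ and $r-1$. Label them $t_2 = 1$ and $t_1 = r-1$.

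The hypotheses of \Cref{genpoly} need to be checked.
\begin{itemize}
\item The condition $s \neq -r^2/4$ becomes $1-r \neq -r^2/4$, i.e.\ $(r-2)^2 \neq 0$. So we must assume $r \neq 2$. Then the roots $1$ and $r-1$ are distinct.
\item $s \neq 0$ requires $r \neq 1$. This also makes $t_1 = r-1$ nonzero.
\end{itemize}
These restrictions are implicit in the corollary. In the write-up I would state them explicitly, noting that the corollary inherits them from the standing assumptions of \Cref{genpoly}.

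We also need $c_1 \neq 0$ for the root $t_1 = r-1$, since \Cref{genpoly} normalizes so that $c_1 \neq 0$ and the formula divides by $c_1^{k-1}$. The proof's ``swap indexes'' step can't be used freely here, because we need the root equal to $1$ to be the one labelled $t_2$. If $c_1 = 0$, then $G_n = c_2$ is constant, and the formula must be interpreted with that degenerate case excluded. I expect this bookkeeping about which root carries index $2$ to be the only real subtlety. I would handle it by adding the hypothesis $c_1 \neq 0$ (equivalently, the sequence is not constant, i.e.\ $a \neq b$).

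With $t_2 = 1$, every factor $t_2^{n}$ and $t_2^{kn}$ in \Cref{genpoly} equals $1$. The coefficients become
\[
a_0 = c_2 + \frac{(-c_2)^k}{c_1^{k-1}}, \qquad a_i = \binom{k}{i}\frac{(-c_2)^{k-i}}{c_1^{k-1}} \quad (1 \le i \le k),
\]
and none of them depends on $n$. The term $(-c_2)^k/c_1^{k-1}$ in $a_0$ is exactly the $i=0$ summand of $\binom{k}{i}(-c_2)^{k-i}/c_1^{k-1}$. So we absorb it into the sum over $i = 0, \dots, k$ and pull out the constant $c_2$. This gives \cref{polyId2} directly.

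As a sanity check, the right side of \cref{polyId2} is
\[
c_2 + \frac{(G_n - c_2)^k}{c_1^{k-1}} = c_2 + \frac{(c_1 t_1^n)^k}{c_1^{k-1}} = c_2 + c_1 t_1^{kn} = G_{kn}.
\]
This confirms the identity without the matrix argument, and I would include it as a one-line alternative proof.
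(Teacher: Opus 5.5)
Your proposal is correct and follows the paper's route: specialize \Cref{genpoly} at $t_2=1$ and fold the $(-c_2)^k/c_1^{k-1}$ part of $a_0$ into the $i=0$ summand. Your factorization $(t-1)\bigl(t-(r-1)\bigr)$ is cleaner than the paper's $\sqrt{(r-2)^2}=r-2$, which tacitly needs $r>2$ (for $r<0$ the ``minus'' root is $r-1$), and the hypotheses you make explicit are exactly the ones the paper leaves implicit: $r\neq 1,2$, the unit root fixed as $t_2$, and $c_1\neq 0$, i.e.\ $a\neq b$.
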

\begin{proof}
    Since $s=1-r$, the root $t_2$ of the characteristic polynomial is 
    \[ t_2 = \frac{r - \sqrt{r^2 + 4s}}{2} = \frac{r - \sqrt{r^2 + 4(1-r)}}{2} = \frac{r - (r-2)}{2} = 1. \]
    Thus, the coefficients $a_i(n)$ are
    \begin{align*}
       a_0(n) &= c_2 + \frac{(-c_2)^k }{c_1^{k-1}},   \\
       a_i(n) &= \binom{k}{i} \frac{(-c_2)^{k-i} }{c_1^{k-1}} \quad \text{for} \quad i \in \{1, 2, \ldots, k\}.
    \end{align*}
    These coefficients do not depend on $t_2^n$, and \cref{proofEq1} simplifies to \cref{polyId2}.
\end{proof}

\begin{corollary}
    Let $s = r+1$, so that the gibonacci sequence $(G_n)$ is defined $G_0 = a$, $G_1 = b$, and $G_n = rG_{n-1} + (r+1)G_{n-2}$ for $n \ge 2$.
    Then 
    \begin{equation}\label{polyId3}
        G_{kn} = c_2 (-1)^{kn} + \sum_{i=0}^{k}  (-1)^{(k-i)(n+1)}\binom{k}{i}\frac{c_2^{k-i} }{c_1^{k-1}} G_n^{i}.
    \end{equation}

\end{corollary}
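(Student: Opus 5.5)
The plan is to specialize \Cref{genpoly} exactly as in the previous corollary, with the root $t_2=-1$ in place of $t_2=1$. With $s=r+1$ the characteristic polynomial factors as
\[ t^2-rt-(r+1)=(t-(r+1))(t+1), \]
so its roots are $r+1$ and $-1$. The standing hypotheses of \Cref{genpoly} should be checked first. The condition $s\neq 0$ means $r\neq -1$. The condition $s\neq -r^2/4$ becomes $4(r+1)+r^2=(r+2)^2\neq 0$, i.e.\ $r\neq -2$, which is exactly the condition that the two roots $r+1$ and $-1$ are distinct.

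I would label the roots so that $t_2=-1$ and $t_1=r+1$. The formula $t_2=\frac{r-\sqrt{r^2+4s}}{2}=\frac{r-|r+2|}{2}$ gives $-1$ only when $r>-2$; for $r<-2$ it gives $r+1$. Handling this labeling carefully is the only real obstacle. I would invoke the ``swap indexes'' freedom already used in the proof of \Cref{genpoly}, whose argument never uses which root carries the $+$ sign. One caveat must be stated explicitly: \Cref{genpoly} divides by $c_1$, so we need the coefficient $c_1$ on $(r+1)^n$ to be nonzero. If $c_1=0$, the sequence is the geometric sequence $c_2(-1)^n$, which is already covered by Case 1 of \Cref{gibPolyThm}, so I would exclude or remark on that case.

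Substituting $t_2=-1$ into the coefficients of \Cref{genpoly} uses the identity
\[ (-c_2t_2^n)^{m}=\bigl((-1)^{n+1}c_2\bigr)^{m}=(-1)^{m(n+1)}c_2^{m}. \]
This gives
\[ a_i(n)=(-1)^{(k-i)(n+1)}\binom{k}{i}\frac{c_2^{k-i}}{c_1^{k-1}} \quad\text{for } 1\le i\le k, \]
and
\[ a_0(n)=c_2(-1)^{kn}+(-1)^{k(n+1)}\frac{c_2^{k}}{c_1^{k-1}}. \]

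Finally, I would split $a_0(n)$ into its two pieces. The second piece, $(-1)^{k(n+1)}c_2^k/c_1^{k-1}$, is precisely the $i=0$ term of the general formula, so it can be absorbed into the sum running from $i=0$. The remaining term $c_2(-1)^{kn}$ stands alone outside the sum. Substituting these coefficients into \cref{polyId1} then yields \cref{polyId3}. The coefficients depend on $n$ only through the parity factors $(-1)^{kn}$ and $(-1)^{(k-i)(n+1)}$, so this is a polynomial identity of the form \cref{gibPoly}.
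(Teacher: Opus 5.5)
Your proposal is correct and follows the paper's proof. Like the paper, you specialize \Cref{genpoly} to $t_2=-1$, substitute into the coefficients $a_i(n)$, and absorb the second part of $a_0(n)$ as the $i=0$ term of the sum. Two of your remarks address points the paper's proof passes over silently: $\sqrt{r^2+4(r+1)}=|r+2|$, so for $r<-2$ you need the index swap to get $t_2=-1$, and once $t_2=-1$ is fixed, the coefficient $c_1$ of $(r+1)^n$ must be nonzero for the formula to make sense.
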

\begin{proof}
    Since $s = r + 1$, the root $t_2$ of the characteristic polynomial is 
    \[ t_2 = \frac{r - \sqrt{r^2 + 4s}}{2} = \frac{r - \sqrt{r^2 + 4(r+1)}}{2} = \frac{r - (r+2)}{2} = -1. \]
    Thus, the coefficients $a_i(n)$ are
    \begin{align*}
       a_0(n) &= c_2 (-1)^{kn} + (-1)^{k(n+1)} \frac{c_2^k }{c_1^{k-1}},   \\
       a_i(n) &= (-1)^{(k-i)(n+1)}\binom{k}{i}\frac{c_2^{k-i} }{c_1^{k-1}} \quad \text{for} \quad i \in \{1, 2, \ldots, k\}.
    \end{align*}
   These coefficients  depend on $(-1)^n$, and \cref{proofEq1} simplifies to \cref{polyId3}.
\end{proof}

\section{Repeated Roots}

\Cref{gibPolyThm,genpoly} apply only to gibonacci sequences whose characteristic polynomials have distinct roots; we have not yet addressed sequences with a repeated root.
It turns out that sequences with a repeated root may satisfy polynomial identities, but only in rather trivial cases.

If a gibonacci sequences $(G_n)$, as defined in \cref{gibSeqDef}, has a characteristic polynomial with a repeated root, then its recurrence must be of the form 
\[ G_n = 2\ell G_{n-1} - \ell^2 G_{n-2} \]
for some integer $\ell$. 
In this case, Binet's formula becomes $G_n = (c_1 + c_2 n)\ell^n$, with $c_1 = a$ and $c_2 = \frac{b-a\ell}{\ell}$.
We find that such a sequence satisfies a polynomial identity if it is in fact a geometric, arithmetic, or alternating arithmetic sequence, as stated in the following proposition.

\begin{proposition}
    Define sequence $(G_n)$ by $G_0 = a$, $G_1 = b$, and $G_n = 2\ell G_{n-1} - \ell^2 G_{n-2}$ for $n \ge 2$, where integers $a$ and $b$ are not both zero, and $\ell$ is a nonzero integer.
    Then $(G_n)$ satisfies polynomial identities in the following cases.
    \begin{enumerate}[label=\bf{Case \arabic*.},leftmargin=46pt]
        \item If $b = a\ell$, then $(G_n)$ is a geometric sequence and
            \begin{equation}\label{repeatedRootsGeoId}
                G_{kn} = \frac{1}{a^{k-1}} G_n^k.
            \end{equation}
        \item If $\ell = 1$, then $(G_n)$ is an arithmetic sequence and
            \begin{equation}\label{repeatedRootsArithId}
                G_{kn} = kG_n + (1-k)a.
            \end{equation}
        \item If $\ell = -1$, then $(G_n)$ is an alternating arithmetic sequence and
            \begin{equation}\label{repeatedRootsAltArithId}
                G_{kn} = (-1)^{(k-1)n}kG_n + (-1)^{kn}(1-k)a.
            \end{equation}
    \end{enumerate}
\end{proposition}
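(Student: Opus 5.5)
The plan is to work directly from the repeated-root Binet formula $G_n = (c_1 + c_2 n)\ell^n$, with $c_1 = a$ and $c_2 = \frac{b-a\ell}{\ell}$. Each case comes down to a short computation of $G_{kn}$ compared against the claimed right-hand side. No linear algebra or Girard-Waring machinery is needed, because in each case the $n$-dependence of $G_n$ collapses to something simple.

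\textbf{Case 1.} If $b = a\ell$, then $c_2 = 0$, so $G_n = a\ell^n$. Since $G_n$ is not identically zero and $c_2 = 0$, we must have $a \neq 0$. Then
\[ G_{kn} = a\ell^{kn} = \frac{(a\ell^n)^k}{a^{k-1}} = \frac{1}{a^{k-1}}G_n^k, \]
which is \cref{repeatedRootsGeoId}. This mirrors Case 1 of \Cref{gibPolyThm} with $c_1 = a$.

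\textbf{Case 2.} If $\ell = 1$, the recurrence is $G_n = 2G_{n-1} - G_{n-2}$, so $G_n = a + dn$ with $d = b - a = c_2$. Then
\[ G_{kn} = a + dkn = k(a + dn) + (1-k)a = kG_n + (1-k)a, \]
giving \cref{repeatedRootsArithId}.

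\textbf{Case 3.} If $\ell = -1$, we have $G_n = (-1)^n(a + dn)$ with $d = c_2 = -(b+a)$. First write $a + dn = (-1)^n G_n$. Then
\[ G_{kn} = (-1)^{kn}(a + dkn) = (-1)^{kn}\bigl(k(a+dn) + (1-k)a\bigr) = (-1)^{kn}k(-1)^nG_n + (-1)^{kn}(1-k)a. \]
Since $(-1)^{kn+n} = (-1)^{(k-1)n}$, this is exactly \cref{repeatedRootsAltArithId}.

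The only point needing care is the bookkeeping of the signs $(-1)^{kn}$ versus $(-1)^{(k-1)n}$ in Case 3, which is handled by the parity identity $(-1)^{(k+1)n} = (-1)^{(k-1)n}$. There is also the observation in Case 1 that $a \neq 0$, so that dividing by $a^{k-1}$ is legitimate. No step presents a real obstacle.
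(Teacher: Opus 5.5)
Your proof is correct and follows the paper's argument essentially step for step: you specialize the repeated-root Binet formula in each case, rearrange $G_{kn}$ directly, and handle Case 3 by reducing it to the Case 2 manipulation via $a+dn = (-1)^n G_n$. Your explicit remark that $a \neq 0$ in Case 1, forced by $b = a\ell$ together with $a,b$ not both zero, is a small justification that the paper leaves implicit.
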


\begin{proof}
    We proceed case by case.

    \textbf{Case 1.} If $b = a\ell$, then $G_n = a\ell^n$ for all nonnegative integers $n$. 
        Thus, $(G_n)$ is a geometric series and satisfies the identity $G_{kn} = \frac{1}{a^{k-1}} G_n^k$.

    \textbf{Case 2.} If $\ell = 1$, then Binet's formula becomes $G_n = a + (b-a)n$.
        This means that $(G_n)$ is an arithmetic sequence. By Binet's formula and simple algebra,
        \begin{align*}
            G_{kn} &= a + (b-a)kn \\
            &= k(a + (b-a)n) - (k-1)a \\
            &= kG_n - (k-1)a,
        \end{align*}
        which is \cref{repeatedRootsArithId}.

    \textbf{Case 3.} If $\ell = -1$, then Binet's formula becomes $G_n = (a -n(a+b))(-1)^n$.
        Thus, $(G_n)$ is an alternating arithmetic sequence.
        \begin{align*}
            G_{kn} &= (a -kn(a+b))(-1)^{kn} \\
            &= (ak -kn(a+b))(-1)^{kn} - ak(-1)^{kn} + a(-1)^{kn} \\
            &= (-1)^{(k-1)n}kG_n + (-1)^{kn}(1-k)a,
        \end{align*}
        which is \cref{repeatedRootsAltArithId}.
        \qedhere
\end{proof}

While the identities in \cref{repeatedRootsGeoId,repeatedRootsArithId,repeatedRootsAltArithId} are in the form of \cref{gibPoly}, these identities are somewhat trivial since they have only one or two nonzero terms. 
Furthermore, \cref{repeatedRootsGeoId} is equivalent to \cref{idCase1}.

\section{Final Thoughts}

We have found polynomial identities for certain families of gibonacci sequences, but questions remain. 
Are there other polynomial identities beyond those stated above? 
In what ways could we generalize these polynomial identities?
Or, in what ways could we generalize our notion of gibonacci sequence that would lead to more polynomial identities?
We leave it to the interested reader to pursue these questions.

\section*{Acknowledgments}

This work was started when the first three authors were undergraduate students at St.\ Olaf College.
We thank the St.\ Olaf Collaborative Undergraduate Research and Inquiry (CURI) for supporting undergraduate research.
In the preparation of this document, generative AI was used only for final grammatical checks and searching for typos.
The mathematical content of this article is entirely due to the authors.

\bibliographystyle{alpha}
\bibliography{references} 

@article{filipponi1995,
    author = {Piero Filipponi},
    title = {Some Binomial {Fibonacci} Identities},
    journal = {The Fibonacci Quarterly},
    volume = {33},
    number = {3},
    pages = {251--257},
    year = {1995},
    publisher = {Taylor \& Francis},
    doi = {10.1080/00150517.1995.12429143}
}

@article{gould1997,
    author = {Henry W. Gould},
    title = {The {Girard-Waring} Power Sum Formulas for Symmetric Functions, and {Fibonacci} Sequences},
    journal = {The Fibonacci Quarterly},
    volume = {37},
    number = {2},
    pages = {135--140},
    year = {1999},
    publisher = {Taylor \& Francis},
    doi = {10.1080/00150517.1999.12428871}
}

@article{halton1965,
    author = {John H. Halton},
    title = {On a General {Fibonacci} Identity},
    journal = {The Fibonacci Quarterly},
    volume = {3},
    number = {1},
    pages = {31--43},
    year = {1965},
    publisher = {Taylor \& Francis},
    doi = {10.1080/00150517.1965.12431452}
}

@article{jennings1993,
    author = {Derek Jennings},
    title = {Some Polynomial Identities for the {Fibonacci} and {Lucas} Numbers},
    journal = {The Fibonacci Quarterly},
    volume = {31},
    number = {2},
    pages = {134--137},
    year = {1993},
    publisher = {Taylor \& Francis},
    doi = {10.1080/00150517.1993.12429302}
}

@book{koshy2017,
    title     = {{Fibonacci} and {Lucas} Numbers with Applications},
    author    = {Thomas Koshy},
    volume    = {1},
    year      = {2017},
    publisher = {Wiley},
    address   = {New York}
}

@article{martinjak2017,
    author = {Ivica Martinjak and Iva Vrsaljko},
    title = {On Polynomial Identities for Recursive Sequences},
    journal = {Miskolc Mathematical Notes},
    volume = {18},
    number = {1},
    pages = {327--336},
    year = {2017},
    publisher = {Miskolc University Press},
    doi = {10.18514/MMN.2017.1776}
}

@article{swamy1997,
    author = {M. N. S. Swamy},
    title = {On Certain Identities Involving {Fibonacci} and {Lucas} Numbers},
    journal = {The Fibonacci Quarterly},
    volume = {35},
    number = {3},
    pages = {230--231},
    year = {1997},
    publisher = {Taylor \& Francis},
    doi = {10.1080/00150517.1997.12428991}
}

\end{document}